 \newtheorem{theorem}{Theorem}[section]
\newtheorem{cor}[theorem]{Corollary}
\theoremstyle{definition}
\newtheorem{definition}[theorem]{Definition}
\theoremstyle{definition}
\newtheorem{remark}[theorem]{Remark}
\theoremstyle{definition}
\theoremstyle{definition}
\newtheorem{proposition}[theorem]{Proposition}
\begin{document}

\title[Definable Continuous Induction on Ordered Abelian
Groups]{Definable Continuous Induction on Ordered Abelian
Groups}

 \author[ Jafar S. Eivazloo]{Jafar S. Eivazloo$^1$ }

 \address{ Department of Mathematics, University of Tabriz, P.O.Box: 51666-14766, Tabriz,
Iran.}

 \email{eivazloo@tabrizu.ac.ir}

\subjclass[2010]{03C60, 06F15, 06F30.}

\keywords{ Definable
Continuous Induction, Definable Gap, Definably Complete, Definably compact, Pseudo-finite set.}
\subjclass[msc2000]{03C60, 06F15, 06F30.}

 \begin{abstract}
As mathematical induction is applied to prove statements on natural numbers, {\it continuous induction} (or, {\it real induction}) is a tool to prove some statements in real analysis.(Although, this comparison is somehow an overstatement.) Here, we first consider it on densely ordered abelian groups to prove Heine-Borel theorem (every closed and bounded interval is compact with respect to order topology) in those structures. Then, using the recently introduced notion of pseudo finite sets, we introduce  a first order definable version of {\it continuous induction} in the language of ordered groups and we use it to prove a
definable version of  Heine-Borel theorem on densely ordered abelian groups.
 \end{abstract}

\maketitle
\section{Introduction and Preliminaries}

As mathematical induction is a good proof technique for statements on natural numbers, some efforts have been made to provide a similar technique in the real field, see  \cite{Ch, H, XX1, XX2, K, Cl}. Here, we define {\it continuous induction} on densely ordered abelian groups. Our motivation is to give a definable version of {\it continuous induction} in ordered structures such as the definable version of mathematical induction in Peano arithmetic.

 A structure $(G,+,<)$ is an ordered abelian group if
$(G,+)$ is an abelian group, $(G,<)$ is a linear ordered set, and
for all $x,y,z$ in $G$, if $x<y$ then $x+z<y+z$. An ordered
abelian group is dense if for all $x<y$ in $G$ there exists $z\in
G$ such that $x<z<y$. If $G$ is not dense, then it is discrete,
i.e. it contains a smallest positive member. Throughout the paper, let $G$ be a fixed arbitrary  densely ordered abelian group. Intervals in $G$ are defined in common way like in the real field $\Bbb{R}$. Then, $G$ is equipped with the interval topology for which  open intervals in $G$ constitute a topology base.

A nonempty
subset $C$ of  $G$ is a Dedekind cut  if it is downward closed. A cut $C$ is proper if $C\neq
G$. A proper cut $C$ is said to be  a gap in $G$ if it does not
have a least upper bound in $G$. An ordered abelian group $G$
which has no any gaps is said to be  Dedekind complete. Clearly,
$G$ is Dedekind complete if and only if every nonempty and bounded
from above subset of $G$ has a least upper bound in $G$.

 In the following,  we formulate the continuous induction over an
ordered abelian group.
\begin{definition}
\noindent { \it  Continuous Induction} ({${\rm CI}$}): Let $S\subseteq G$ satisfy the following conditions: \\
$i$) there exists at least one element
$g\in G$ such that $(-\infty , g)\subseteq S$, \\
$ii$) for
every element $x \in G$, if  $(-\infty , x)\subseteq S$
then there exists  $y>x$ in $G$ such that $(-\infty , y)\subseteq S$.

Then, $S=G$.
\end{definition}
We refer the conditions $i$ and $ii$ above as the assumptions of ${\rm CI}$.
\section{The Strength of Continuous Induction}
In order to see the usefulness of proving by continuous induction, we refer to \cite{Ch}, \cite{Cl}, and \cite{K}. Here, we use ${\rm CI}$ to prove that Heine-Borel theorem holds in densely ordered abelian groups. Our proof in somehow is different from ones in the mentioned references. It is such that to be adaptable for the definable version in the next section.

 A linearly ordered set which is dense (in itself) and
has no end points is Dedekid complete if and only if it satisfies
the CI \cite{H}. A similar statement is true for ordered (dense
or non-dense) abelian groups.

\begin{proposition} \label{Ded-CI}
An ordered abelian group $G$ is Dedekind complete if and only if
it satisfies the ${\rm CI}$.
\end{proposition}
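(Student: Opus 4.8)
The plan is to establish the two implications separately, using throughout the characterization that $G$ is Dedekind complete exactly when every nonempty, bounded-above subset admits a least upper bound, together with the elementary fact that a nontrivial ordered abelian group has no largest element.

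For the direction ``Dedekind complete $\Rightarrow$ ${\rm CI}$'', I would assume $G$ is complete and let $S \subseteq G$ satisfy the assumptions $i$ and $ii$. The key device is the auxiliary set $A = \{x \in G : (-\infty,x) \subseteq S\}$, which is nonempty by $i$ and downward closed by construction. If $A = G$, then for each $s \in G$ one may pick $x > s$ (there being no largest element), and $x \in A$ gives $s \in (-\infty,x) \subseteq S$, so $S = G$ and we are done. Otherwise $A$ is a proper, nonempty, bounded-above set, so completeness yields $\alpha = \sup A$. I would then check that $(-\infty,\alpha) \subseteq S$: any $z < \alpha$ is strictly below some $x \in A$, and since $(-\infty,x) \subseteq S$ we get $z \in S$. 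Thus $\alpha \in A$, and applying $ii$ produces $y > \alpha$ with $(-\infty,y) \subseteq S$, i.e. $y \in A$, contradicting that $\alpha$ is an upper bound of $A$. Hence $A = G$ and $S = G$.

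For the converse I would argue by contraposition. Suppose $G$ is not complete, so it has a gap $C$; I would first record the reformulation of ``no least upper bound'' as saying that $C$ is a nonempty, proper, downward-closed set having no maximum and whose complement $G \setminus C$ has no minimum. Taking $S = C$, assumption $i$ holds because $C$ is nonempty and downward closed. For $ii$, suppose $(-\infty,x) \subseteq C$. If $x \notin C$, then every element below $x$ lies in $C$ while $x$ does not, making $x$ the least element of $G \setminus C$, which is impossible for a gap; hence $x \in C$, and since $C$ has no maximum there is $y \in C$ with $y > x$, giving $(-\infty,y) \subseteq C$. Thus $S = C$ satisfies both assumptions, so ${\rm CI}$ forces $C = G$, contradicting that $C$ is proper.

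The step I expect to require the most care is the treatment of the boundary point in each direction, since this is precisely where the completeness / gap hypothesis is consumed: in the forward direction, verifying $\alpha \in A$ (equivalently $(-\infty,\alpha) \subseteq S$) before invoking $ii$, and in the converse, ruling out the case $x \notin C$ by using that the complement of a gap has no minimum. Everything else is bookkeeping with downward-closed sets and the absence of endpoints.
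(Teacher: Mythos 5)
Your proof is correct and follows essentially the same route as the paper: in the forward direction you form the same auxiliary cut $A=\{x\in G:(-\infty,x)\subseteq S\}$ and use condition $ii$ to show it cannot have a least upper bound (you merely make explicit the step, left tacit in the paper, that a supremum of $A$ would itself lie in $A$), and in the converse you take a gap as the counterexample set and verify the two hypotheses of ${\rm CI}$ for it, which the paper asserts without detail. No substantive difference in method.
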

\begin{proof}
 Let $G$ be a  Dedekind complete ordered abelian
group, and $S\subset G$ satisfy the assumptions $i$ and $ii$ of ${\rm CI}$. Let $C=\{g
\in G| \ (-\infty , g)\subseteq  S\}$. Then, $C$ is a Dedekind
cut in $G$. It follows from the condition $ii$  of the ${\rm CI}$ that
$C$ does not have a least upper bound in $G$, and so by Dedekind
completeness of $G$, $C=G$. From that we have $S=G$. Conversely, if $G$
is not Dedekind complete then it does have a gap, say $C$. The
subset $C$  satisfies the  conditions  of ${\rm CI}$, but $C\neq G$. So
$G$ does not satisfy the ${\rm CI}$.
\end{proof}

Therefore, every densely ordered abelian group $G$ which satisfies
the ${\rm CI}$ is isomorphic to the ordered additive group of reals.
For some practical purposes, we also define a bounded version of ${\rm CI}$, {\it bounded continuous induction} (${\rm BCI}$), on ordered abelian groups.

\begin{definition}
For every $a<b$ in $G$ and every $S\subseteq [a,b)$, if \\
i) there exists $x>a$ such that
$[a,x)\subseteq S$, and \\
ii) for every $x>a$ for which $[a,x)\subseteq S$, there exists $y>x$ such that $[a,y)\subseteq S$, \\
then $S=[a,b)$.
\end{definition}

\begin{proposition} \label{CI=BCI}
On densely ordered abelian groups, ${\rm CI}$ is equivalent to ${\rm BCI}$.
\end{proposition}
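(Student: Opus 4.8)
The plan is to establish both implications directly, transferring each version of the induction principle to the other by a boundary modification of the inductive set; Proposition \ref{Ded-CI} can serve as a cross-check and as an alternative route.

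For the implication ${\rm CI}\Rightarrow{\rm BCI}$, suppose $G$ satisfies ${\rm CI}$, fix $a<b$, and let $S\subseteq[a,b)$ satisfy the two hypotheses of ${\rm BCI}$. I would \emph{cap off} $S$ by setting $S'=(-\infty,a)\cup S\cup[b,+\infty)$ and show $S'$ satisfies the hypotheses of ${\rm CI}$. Hypothesis $i$) of ${\rm CI}$ holds because ${\rm BCI}$-$i$) gives some $x>a$ with $[a,x)\subseteq S$, whence $(-\infty,x)=(-\infty,a)\cup[a,x)\subseteq S'$. For hypothesis $ii$), take $x$ with $(-\infty,x)\subseteq S'$ and split on the position of $x$: if $x\le a$ the successor is again furnished by ${\rm BCI}$-$i$); if $a<x<b$ then $[a,x)\subseteq S$ (every point of $[a,x)$ lies in $[a,b)$ and in $S'$, hence in $S$), so ${\rm BCI}$-$ii$) yields $y>x$ with $[a,y)\subseteq S\subseteq S'$; and if $x\ge b$ the hypothesis already forces $[a,b)\subseteq S$, so $S'=G$ and any $y>x$ works. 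Then ${\rm CI}$ gives $S'=G$, and since $S\subseteq[a,b)$ this reads off as $S=[a,b)$.

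For the converse ${\rm BCI}\Rightarrow{\rm CI}$, suppose $G$ satisfies ${\rm BCI}$ and let $S\subseteq G$ satisfy the hypotheses of ${\rm CI}$; fix an arbitrary target $c\in G$ and show $c\in S$. Using ${\rm CI}$-$i$) choose $g$ with $(-\infty,g)\subseteq S$, then pick $a<\min\{g,c\}$ and $b>c$ (possible since $G$ is unbounded in both directions), and \emph{localize} by setting $S''=S\cap[a,b)$. The key observation is that $(-\infty,a)\subseteq(-\infty,g)\subseteq S$, so for $a<x\le b$ the condition $[a,x)\subseteq S''$ is equivalent to $(-\infty,x)\subseteq S$; this lets the successor clause ${\rm CI}$-$ii$) drive the step clause of ${\rm BCI}$, intersecting the produced $y$ with $b$ (that is, replacing $y$ by $\min\{y,b\}$) to stay inside $[a,b)$, while ${\rm BCI}$-$i$) follows from $[a,\min\{g,b\})\subseteq S''$. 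Hence ${\rm BCI}$ gives $S''=[a,b)$, so $c\in S''\subseteq S$; as $c$ was arbitrary, $S=G$.

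The main obstacle, and the only delicate point, is the behaviour at the right endpoint $b$: the step clause of ${\rm BCI}$ cannot manufacture a successor once $[a,b)\subseteq S$, so it must be understood as applying to interior $x<b$, and the localization must be arranged so that the target $c$ is reached strictly below $b$. This is exactly what the choice $b>c$ and the truncation $y\mapsto\min\{y,b\}$ accomplish. As a consistency check, both implications agree with Proposition \ref{Ded-CI}: one may equally prove ${\rm BCI}\Leftrightarrow$ Dedekind completeness (a gap $C$ produces the counterexample $S=C\cap[a,b)$, while completeness yields ${\rm BCI}$ by a supremum argument applied to $\{x\in(a,b]:[a,x)\subseteq S\}$) and then chain through Proposition \ref{Ded-CI}.
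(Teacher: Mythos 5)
Your proof is correct, and one half of it takes a genuinely different route from the paper's. For ${\rm BCI}\Rightarrow{\rm CI}$ you do essentially what the paper does: restrict $S$ to $[a,b)$ and feed the CI step clause into the BCI one (the paper lets $a,b$ range arbitrarily rather than fixing a target $c$, but that is cosmetic). For ${\rm CI}\Rightarrow{\rm BCI}$, however, the paper argues by contradiction: it invokes Proposition \ref{Ded-CI} to get Dedekind completeness and then claims that if $S\neq[a,b)$ the set $C=\{x\in G\mid\exists y\in S:\ x<y\}$ is a gap. Your capping-off construction $S'=(-\infty,a)\cup S\cup[b,+\infty)$ instead verifies the CI hypotheses directly and is arguably more robust: the paper's $C$ is the downward closure of $S$, which need not be a gap when $S$ is not an initial segment of $[a,b)$ (the cut one actually wants is built from $\{x>a:[a,x)\subseteq S\}$), whereas your case split on the position of $x$ relative to $a$ and $b$ handles arbitrary $S$ without appeal to Proposition \ref{Ded-CI}. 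You are also right, and more explicit than the paper, about the endpoint subtlety: the step clause of ${\rm BCI}$ must be read as quantifying over $x<b$ (the paper's own use of ${\rm BCI}$ in Theorem \ref{HB-CI} confirms this reading), and your choices $b>c$ and $y\mapsto\min\{y,b\}$ deal with it cleanly. The trade-off is that the paper's argument is shorter once Proposition \ref{Ded-CI} is in hand, while yours is self-contained and makes the equivalence purely combinatorial.
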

\begin{proof}
 Let $G$ be a densely ordered abelian group
that satisfies ${\rm CI}$. Suppose that  $S\subseteq [a,b)$, for $a<b$
in $G$, satisfies the two conditions  of  ${\rm BCI}$. If $S$ is not the
whole of $[a,b)$, then $C=\{x\in G|\exists y \in S: \  x<y\}$ is a gap
in $G$ which is Dedekind complete by Proposition \ref{Ded-CI}. This is a
contradiction.

Conversely, let $G$ satisfy ${\rm BCI}$, and
the two conditions  of  ${\rm CI}$ hold for $S\subset G$. Then, there is an element $x\in G$ such
that $(-\infty , x)\subset S$. Let    $a<x$, and
$b>x$ be arbitrary. Then,  the subset $S'=[a,b)\cap S$ of $G$ satisfies the conditions  of
${\rm BCI}$. Thus, $S'=[a,b)$ and so $[a,b)\subset S$. Therefore, $S=G$.
\end{proof}

In the following theorem, we  give a proof for Heine-Borel theorem on  densely ordered abelian
groups by using   ${\rm CI}$. We also show that any densely ordered abelian
group which satisfies Heine-Borel theorem, holds   ${\rm CI}$ too.

{\it Heine-Borel } ($\rm HB$) theorem: Every closed bounded interval $[a,b]$ is compact.\footnote{Here,
we consider the order topology, i.e. a topology upon open
intervals $(a,b)$ in $G$.}

\begin{theorem} \label{HB-CI}
Any densely ordered abelian group satisfies ${\rm CI}$ if and only if it
 satisfies Heine-Borel theorem.
\end{theorem}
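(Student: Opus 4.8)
The plan is to establish the two implications separately, in each case reducing ${\rm CI}$ to a more convenient equivalent: for the forward direction I would invoke Proposition~\ref{CI=BCI} to work with ${\rm BCI}$ on the bounded interval at hand, and for the backward direction I would invoke Proposition~\ref{Ded-CI} to replace ${\rm CI}$ by Dedekind completeness and argue by contraposition.

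For the direction ${\rm CI}\Rightarrow{\rm HB}$, fix a closed bounded interval $[a,b]$ together with an open cover $\mathcal{U}$, and apply ${\rm BCI}$ (available since we assume ${\rm CI}$) to
\[
S=\bigl\{x\in[a,b): [a,x]\text{ is covered by finitely many members of }\mathcal{U}\bigr\}.
\]
For the base condition~$i$, a member of $\mathcal{U}$ containing $a$ is open, hence contains an interval $(c,d)\ni a$, so $[a,x)\subseteq S$ for every $x\in(a,d)$. For the step condition~$ii$, assume $[a,x)\subseteq S$ with $x<b$; choosing $U_{x}\in\mathcal{U}$ with $(c,d)\ni x$ open, picking by density a point $z\in(\max\{a,c\},x)\subseteq S$, and adjoining $U_{x}$ to a finite cover of $[a,z]$ produces a finite cover of $[a,d)$, whence density yields $y\in(x,\min\{d,b\})$ with $[a,y)\subseteq S$. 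Thus ${\rm BCI}$ gives $S=[a,b)$, and covering $b$ by an open $U_{b}\supseteq(c',d')\ni b$ and splicing $U_{b}$ with a finite cover of $[a,z']$ for some $z'\in(c',b)\cap S$ yields a finite subcover of the whole $[a,b]$.

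For the direction ${\rm HB}\Rightarrow{\rm CI}$, by Proposition~\ref{Ded-CI} it suffices to show that failure of Dedekind completeness forces failure of ${\rm HB}$. Suppose $C$ is a gap; then $C$ has no largest element and $G\setminus C$ has no smallest element, since either extremum would be a least upper bound of $C$. Pick $a\in C$ and $b\in G\setminus C$, so $a<b$, and consider
\[
\mathcal{U}=\bigl\{(-\infty,c): c\in C\bigr\}\cup\bigl\{(d,+\infty): d\in G\setminus C\bigr\}.
\]
This is an open cover of $[a,b]$ precisely because $C$ has no maximum and $G\setminus C$ no minimum. Given any finite subfamily, its union is $(-\infty,c^{*})\cup(d^{*},+\infty)$ with $c^{*}=\max c_{i}\in C$ and $d^{*}=\min d_{j}\in G\setminus C$, so that $c^{*}<d^{*}$; any $q$ with $\max\{a,c^{*}\}\le q\le\min\{b,d^{*}\}$ lies in $[a,b]$ yet is left uncovered, so no finite subcover exists and $[a,b]$ is not compact (the cases in which only rays of one type are chosen are even easier, leaving $b$ or $a$ respectively uncovered).

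The step I expect to require the most care is the endpoint in the forward direction: ${\rm BCI}$ delivers only the half-open interval $S=[a,b)$, so the closed point $b$ must be absorbed afterwards, and throughout the step condition one must verify that the \emph{full} initial segment $[a,y)\subseteq S$ is produced rather than merely $y\in S$, which is exactly what forces the repeated appeals to the density of $G$ to interpolate the auxiliary points $z$ and $y$ inside the open sets of the cover.
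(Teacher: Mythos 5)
Your proposal is correct and follows essentially the same route as the paper: the forward direction applies ${\rm BCI}$ (via Proposition~\ref{CI=BCI}) to the same set $S$ of points $x$ with $[a,x]$ finitely covered and then absorbs the endpoint $b$, while the backward direction uses Proposition~\ref{Ded-CI} to extract a gap $C$ and builds the same ray cover of $[a,b]$ with no finite subcover (the paper merely keeps $G\setminus C$ as a single open set where you decompose it into rays $(d,+\infty)$). The only cosmetic difference is that you interpolate points by density where the paper uses symmetric intervals $(x-\epsilon,x+\epsilon)$ from the group structure; the substance is identical.
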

\begin{proof}
 Let $G$ be a densely ordered abelian group that
satisfies ${\rm CI}$. Take arbitrary elements $a<b$ in $G$. We show that
the closed and bounded interval $[a,b]$ is compact. Let
$\mathcal{U}$ $=\{U_i \}_{i\in I}$ be an open cover for $[a,b]$,
and let
$$S= \{ x \in [a,b) \ |\  [a,x] \ \rm is \ covered \ by \ a
\ finite \ subset \ of \ \mathcal{U} \}.$$

By Proposition \ref{CI=BCI}, $G$ satisfies ${\rm BCI}$. In the following claim we
show that the conditions $i$ and $ii$  of ${\rm BCI}$ on
$[a,b)$ hold for $S$, so we have $S=[a,b)$. On the other hand, there exists
$i_n \in I$ such that $b\in U_{i_n}$, and so $(b-\epsilon, \
b]\subseteq U_{i_n}$ for some $\epsilon \in G^{>0}$.  Since
$b-\epsilon \in S$, then there exists a finite subset of
$\mathcal{U}$ which covers $[a,\ b-\epsilon ]$. Consequently, $[a,b]$ is covered
by a finite subset of $\mathcal{U}$.

{\it \bf Claim.} The subset $S$ satisfies the conditions  of ${\rm BCI}$
on $[a,b)$.

{\it \bf Proof of the claim.}
 $i$) Since $\mathcal{U}$ covers $[a,b]$, then there exists $i\in I$ such that
$a\in U_i$, so $[a,x_0)\subseteq U_i$ for some $x_0 \in (a,b)$.
Hence, by the definition of $S$, $[a,x_0)\subseteq S$.

$ii$) Let $a<x<b$, and $[a,x)\subseteq S$. There is an $i_0\in I$ such that
$x\in U_{i_0}$, and so $(x-\epsilon, \ x+\epsilon)\subseteq
U_{i_0}$ for some $\epsilon \in G^{>0}$. Since $x - \epsilon \in
S$, then there is a finite subcover of $\mathcal{U}$ for $[a, \
x-\epsilon]$, say $\{U_{i_1}, U_{i_1},...,U_{i_n}\}$. So,
$\{U_{i_0}, U_{i_1}, U_{i_1},...,U_{i_n}\}$ is a finite subcover
for $[a, \ x+\delta]$ for every $0<\delta<\epsilon$. Thus,  $[a,\
x+\epsilon )\subseteq S$. \hfill End of the proof of the claim.

Conversely, suppose  $G$ satisfies Heine-Borel theorem, i.e.
all closed bounded intervals in $G$ are compact. If $G$ does not
satisfy ${\rm CI}$, then by proposition \ref{Ded-CI}, $G$ has a Dedekind gap,
say $C$. Take $a$ in $C$ and b in $G \setminus C$. Then,
$\{(-\infty ,\ x)\ | \  x\in C \ and \  x>a \} \cup \{ G\setminus
C \}$ is an open cover for $[a,b]$ which does not have any finite
subcover.
\end{proof}
 In the real field $\Bbb{R}$, a consequence of ${\rm HB}$ theorem is that any continuous function on a closed bounded interval is uniformly continuous. In a similar method, one can easily conclude  the following result in ordered abelian groups.

\begin{cor}
Let $G$ be an ordered  abelian group that satisfies ${\rm CI}$. Then, if
 $f:[a,b]\rightarrow G$ is a continuous function, then it is uniformly continuous on $[a,b]$.
\end{cor}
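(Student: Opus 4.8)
The plan is to mimic the classical Heine--Borel proof of uniform continuity, using that $G$ satisfies $\mathrm{HB}$ by Theorem~\ref{HB-CI} (since $G$ satisfies $\mathrm{CI}$). Throughout I write $|x|$ for $\max(x,-x)$, so that an order-open interval of ``radius'' $r>0$ about $p$ is $(p-r,\ p+r)=\{x:|x-p|<r\}$, and uniform continuity means: for every $\epsilon>0$ in $G$ there is $\delta>0$ in $G$ with $|f(u)-f(v)|<\epsilon$ whenever $u,v\in[a,b]$ and $|u-v|<\delta$.

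First I would fix $\epsilon>0$ and, using density, choose $\epsilon'>0$ with $\epsilon'+\epsilon'\le\epsilon$ (such an $\epsilon'$ exists in any densely ordered abelian group: pick $c$ with $0<c<\epsilon$; if $2c\le\epsilon$ take $\epsilon'=c$, otherwise take $\epsilon'=\epsilon-c$, whence $2\epsilon'<\epsilon$). For each point $p\in[a,b]$, continuity of $f$ at $p$ yields a radius $\delta_p>0$ such that $|f(x)-f(p)|<\epsilon'$ for all $x\in[a,b]$ with $|x-p|<\delta_p$; again using density, pick $\eta_p>0$ with $\eta_p+\eta_p\le\delta_p$. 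The order-open intervals $\{(p-\eta_p,\ p+\eta_p)\}_{p\in[a,b]}$ then form an open cover $\mathcal U$ of $[a,b]$.

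Next I would invoke $\mathrm{HB}$ to extract a finite subcover, indexed by points $p_1,\dots,p_n\in[a,b]$, and set $\delta=\min\{\eta_{p_1},\dots,\eta_{p_n}\}$; this minimum exists and is positive precisely because the subcover is finite. The verification is the usual triangle-inequality computation: given $u,v\in[a,b]$ with $|u-v|<\delta$, the point $u$ lies in some $(p_i-\eta_{p_i},\ p_i+\eta_{p_i})$, so $|u-p_i|<\eta_{p_i}$ and hence $|v-p_i|\le|v-u|+|u-p_i|<\delta+\eta_{p_i}\le\eta_{p_i}+\eta_{p_i}\le\delta_{p_i}$; thus both $u$ and $v$ lie within $\delta_{p_i}$ of $p_i$, giving $|f(u)-f(p_i)|<\epsilon'$ and $|f(v)-f(p_i)|<\epsilon'$, whence $|f(u)-f(v)|\le|f(u)-f(p_i)|+|f(p_i)-f(v)|<\epsilon'+\epsilon'\le\epsilon$.

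The argument is essentially routine, so the only genuine obstacle is the bookkeeping peculiar to a general ordered abelian group rather than $\mathbb{R}$: one cannot literally divide by $2$, so each ``$\epsilon/2$'' and ``$\delta_p/2$'' must be replaced by an element furnished by density as above, and one must take care that it is exactly the \emph{finiteness} of the Heine--Borel subcover that makes $\delta$ a positive element of $G$ (an infinite cover could force the relevant infimum down to $0$). As an alternative I could bypass $\mathrm{HB}$ and argue directly by $\mathrm{BCI}$: fixing $\epsilon$, let $S$ be the set of $x\in[a,b]$ for which $f\!\restriction\![a,x]$ admits a single $\delta>0$ working to within $\epsilon$, and check the two $\mathrm{BCI}$ conditions by combining the uniform modulus on $[a,x]$ with a local modulus near $x$ and taking their minimum. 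The Heine--Borel route, however, is the more faithful ``similar method'' to the real case, and is the one I would write up.
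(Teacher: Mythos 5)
Your proof is correct and follows exactly the route the paper intends: the paper offers no written proof, only the remark that the result follows from Heine--Borel ``in a similar method'' to the real case, and your argument is a careful execution of that, with the right fix (using density to replace each halving of $\epsilon$ and $\delta_p$ by an element $\epsilon'$ with $\epsilon'+\epsilon'\le\epsilon$) for the absence of division by $2$ in a general ordered abelian group.
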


\section{Definable Continuous Induction}
\label{lincomp}

Let $\mathcal{L}=\langle+,<,0, \dots \rangle$ be an expansion of the  language of ordered
abelian  groups.
 A subset $D$ of an ordered abelian
group $\langle G,+,<,0\rangle$ is said to be $\mathcal{L}$-definable (with parameters) if  there exists an $\mathcal{L}$-formula $\varphi (x,\bar{g})$, with some parameters  $\bar{g}$   from $G$, such that $D$ consists of all elements  $a\in G$ for which the  $\mathcal{L}$-structure $G$ satisfies $\varphi (a,\bar{g})$. $D= \{a\in G| G\vDash \varphi (a,\bar{g})\}$.  Now, we can consider the
first order version of the continuous induction in $\mathcal{L}$ by
replacing the subset $S$ in the formulation  of ${\rm CI}$ with
a definable subset $D$. Let ${\rm DCI}$ denote the first order definable
version of ${\rm CI}$. Indeed,  ${\rm DCI}$  is  a schema in the language $\mathcal{L}$.

\begin{definition}
For
any $\mathcal{L}$-formula $\varphi(v,\bar{w})$, let ${\rm DCI}_{\varphi}$ denote  the $\mathcal{L}$-formula \\
$\forall \bar{w}((\exists s \forall v<s \varphi (v,\bar{w})\wedge
\forall v (\forall s<v \varphi (s,\bar{w})\rightarrow
 \exists u>v
\forall s<u \varphi (s,\bar{w})))
\rightarrow \forall v \varphi
(v,\bar{w}))$.

Now,  ${\rm DCI}$ is the scheme $\{{\rm DCI}_{\varphi}| \
\varphi(v,\bar{w}) \ \rm is \ an \ \mathcal{L}-formula   \}$.
\end{definition}
So, ${\rm DCI}$ is preserved under elementary equivalence relation between $\mathcal{L}$-structures, as well as under ultraproducts of those structures.

As in the second order case, it is easy to show that ${\rm DCI}$ is equivalent to its bounded version on intervals.
Also, one can  observe that ${\rm DCI}$ is indeed equivalent to the definable version of the Dedekind completeness in densely ordered abelian groups.
We say that a densely
ordered abelian group $\langle G,+,<,0 \rangle $ is definably
(Dedekind) complete if it has no any definable gap.

\begin{proposition}\label{dcg-dci}
A densely ordered abelian group $G$ is definably complete if and only if it satisfies ${\rm DCI}$.

\end{proposition}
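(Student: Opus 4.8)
The plan is to mirror the proof of Proposition~\ref{Ded-CI}, the one genuinely new ingredient being that definability is inherited when one passes from the set defined by $\varphi$ to its associated cut. Throughout, fix an $\mathcal{L}$-formula $\varphi(v,\bar w)$, fix parameters $\bar g$ from $G$, and write $D=\{a\in G:\ G\vDash\varphi(a,\bar g)\}$. The key observation is that the cut attached to $D$, namely $C=\{b\in G:\ (-\infty,b)\subseteq D\}$, is again $\mathcal{L}$-definable over $\bar g$: it is precisely the set defined by $\chi(b,\bar w):=\forall s<b\,\varphi(s,\bar w)$. Hence, should $C$ turn out to be a proper cut without least upper bound, it will be a \emph{definable} gap, and definable completeness can be invoked.

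For the direction ``definably complete $\Rightarrow$ ${\rm DCI}$'', I would assume $G$ has no definable gap and verify an arbitrary instance ${\rm DCI}_\varphi$. So suppose the two hypotheses of ${\rm DCI}_\varphi$ hold at $\bar g$; unwinding the formula, these say exactly that $D$ satisfies conditions $i$ and $ii$ of ${\rm CI}$. The set $C$ is downward closed, and it is nonempty by condition $i$. I would then show, just as in Proposition~\ref{Ded-CI}, that $C$ has no least upper bound: if $\ell$ were one, then every $v<\ell$ fails to be an upper bound of $C$, so $v<c$ for some $c\in C$, whence $v\in(-\infty,c)\subseteq D$; thus $(-\infty,\ell)\subseteq D$, i.e. $\ell\in C$, and now condition $ii$ yields some $y>\ell$ with $y\in C$, contradicting that $\ell$ is an upper bound. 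Therefore $C$ is either all of $G$ or a definable gap; the latter is excluded, so $C=G$, which forces $D=G$, i.e. $\forall v\,\varphi(v,\bar g)$. As $\varphi$ and $\bar g$ were arbitrary, $G\vDash{\rm DCI}$.

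For the converse I would argue by contraposition. If $G$ is not definably complete, choose a definable gap $C$, defined by some formula $\psi(x,\bar g)$, and take $C$ itself in the role of the set $S$. Condition $i$ holds because $C$ is nonempty and downward closed, so $(-\infty,s)\subseteq C$ for any $s\in C$. For condition $ii$, suppose $(-\infty,x)\subseteq C$; if $x\notin C$ then downward closure gives $C=(-\infty,x)$, whose least upper bound is $x$, contradicting that $C$ is a gap, so $x\in C$; since $C$ has no least upper bound, $x$ is not an upper bound of $C$, hence $y\in C$ for some $y>x$, and then $(-\infty,y)\subseteq C$. Thus the hypotheses of the instance ${\rm DCI}_\psi$ are satisfied at the parameters $\bar g$, while $\forall v\,\psi(v,\bar g)$ fails because $C\neq G$. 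Hence ${\rm DCI}_\psi$, and with it ${\rm DCI}$, fails.

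The bulk of the work is bookkeeping; the only step requiring care beyond Proposition~\ref{Ded-CI} is recording that the cut $C$ is cut out by the explicit formula $\chi$, so that definable completeness may legitimately be applied. The small heart of both directions is the same observation---that condition $ii$ is incompatible with $C$ possessing a least upper bound---which I expect to be the only point a careful reader should pause over.
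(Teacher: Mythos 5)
Your proof is correct and follows essentially the same route as the paper: in the forward direction the paper likewise passes to the definable cut given by $\forall u<v\,\varphi(u,\bar w)$ and uses definable completeness to contradict the second induction hypothesis, and in the converse it likewise feeds a definable gap into the corresponding instance of ${\rm DCI}$. Your version merely spells out the bookkeeping (the definability of the cut and the verification of conditions $i$ and $ii$) that the paper leaves implicit.
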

\begin{proof} Assume that $G$ is definably complete and $\phi(v,\bar{a})$
is a formula, with parameters $\bar{a}$, for which
$G$ satisfies the induction hypotheses. If $G \nvDash \forall v \phi(v,\bar{a})$,
then the formula $\psi (v, \bar{a}) =  \forall u<v \phi (u,\bar{a})$ defines a bounded from above nonempty
subset, say $S$, of $G$. By definable completeness, the subset $S$ has a least upper bound in
$G$. This contradicts the second induction hypothesis about $\phi
(v,\bar{a})$.

Conversely, suppose  that $G$ is an ordered abelian group that
satisfies $DCI$. Let $C$ be a definable cut in $G$ which is defined by an  $\mathcal{L}$-formula $\phi(v,\bar{a})$.
 If $C$ does not contain a least upper
bound in $G$, then $G$ satisfies the  hypotheses of  $DCI$ for
$\phi (v,\bar{a})$. Then,  $G\vDash \forall v \phi (v,\bar{a})$ and therefor  $C=G$. Hence, $G$ does not have any definable gap.
\end{proof}
 By \cite{Mi}, definably complete ordered groups are divisible. So, we have the following.
 \begin{cor}
 If $G\vDash DCI$, then $G$ is divisible.
 \end{cor}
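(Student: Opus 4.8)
The plan is to read off divisibility directly from Proposition \ref{dcg-dci} together with the structural fact quoted from \cite{Mi}. Since $G \vDash {\rm DCI}$, Proposition \ref{dcg-dci} gives that $G$ is definably complete, and by \cite{Mi} every definably complete ordered group is divisible; this already closes the corollary. The only substantive content, then, is in re-proving the divisibility step directly from definable completeness, and I would include such a self-contained argument, which I sketch next.

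Fix $g \in G$ and an integer $n \ge 1$; the goal is to produce $h$ with $nh = g$, i.e.\ to show $g \in nG$. I would consider the set $D = \{x \in G : nx < g\}$. This is definable already in the pure language of ordered groups (the term $nx$ abbreviates $x + \cdots + x$), it is downward closed because $x \mapsto nx$ is strictly increasing, it is nonempty (for $x$ sufficiently negative $nx \le x < g$), and it is bounded above (for $x > 0$ one has $x \le nx$, so any positive $x \in D$ satisfies $x < g$). By Proposition \ref{dcg-dci}, $G \vDash {\rm DCI}$ is equivalent to definable completeness, so this definable, nonempty, bounded-above set has a least upper bound $h = \sup D$. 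It then remains to prove $nh = g$ by excluding both strict inequalities.

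The key auxiliary fact is a density lemma: in any densely ordered abelian group, for every $\delta > 0$ there is $\epsilon > 0$ with $n\epsilon < \delta$. I would prove it by using density to pick $a$ with $0 < a < \delta$ and to write $a = e_1 + \cdots + e_n$ as a sum of $n$ strictly positive summands; if every summand satisfied $n e_i \ge \delta$, then summing would give $na \ge n\delta$, hence $a \ge \delta$, a contradiction, so some $\epsilon = e_i$ works. Granting this lemma, if $nh < g$ then applying it with $\delta = g - nh$ yields $\epsilon > 0$ with $n(h+\epsilon) < g$, so $h + \epsilon \in D$, contradicting that $h$ is an upper bound; and if $nh > g$ then applying it with $\delta = nh - g$ yields $\eta > 0$ for which $h - \eta$ is still an upper bound of $D$, since any $x > h - \eta$ has $nx > nh - n\eta > g$ (using $n\eta < nh-g$) and hence $x \notin D$, contradicting that $h$ is the \emph{least} upper bound. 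Therefore $nh = g$, and as $g$ and $n$ were arbitrary, $G$ is divisible.

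I expect the only delicate point to be the density lemma together with its use at the boundary of the cut: it is exactly this step that rules out a ``jump'' of $x \mapsto nx$ across the value $g$, the phenomenon responsible for failures of divisibility in examples such as the dyadic rationals, where the analogous cut has no supremum at all and definable completeness already breaks down. Everything else is the routine supremum bookkeeping already carried out in the proof of Theorem \ref{HB-CI}.
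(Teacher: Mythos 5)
Your first paragraph is exactly the paper's proof: the corollary is obtained by combining Proposition \ref{dcg-dci} (${\rm DCI}$ is equivalent to definable completeness) with the fact, quoted from \cite{Mi}, that definably complete ordered groups are divisible; the paper says no more than this. What you add on top is a correct, self-contained proof of the imported divisibility fact, and it checks out: the set $D=\{x : nx<g\}$ is quantifier-free definable in the group language, downward closed, nonempty and bounded above, so definable completeness yields $h=\sup D$; your density lemma (every $\delta>0$ dominates some $n\epsilon$ with $\epsilon>0$, proved by splitting a positive $a<\delta$ into $n$ positive summands, which is where density of $G$ is genuinely used) correctly rules out both $nh<g$ and $nh>g$. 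The only thing your argument buys beyond the paper is independence from \cite{Mi}: the paper treats divisibility as a black box, while your version makes the corollary self-contained and makes visible exactly where density enters (the lemma fails in $\mathbb{Z}$, which is definably complete but not divisible, consistent with the paper's standing assumption that $G$ is densely ordered). Both routes are sound; yours is longer but exposes the mechanism.
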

\begin{remark} \label{Re}
According to \cite{Dr} a linearly ordered structure $M$ is said to
be \textsl{o}-minimal if any  definable subset of
$M$ is a finite union  of intervals and points in $M$. Any
\textsl{o}-minimal structure is definably complete, but the
converse is not true for arbitrary ordered structures. (See \cite{PS},
Proposition 1.2. and the following comments.) On the other hand, the theory of divisible ordered abelian groups in the language of ordered groups   has quantifier elimination property (See \cite{M}). So, every definable subset of a divisible ordered abelian group in the language
of ordered groups is a finite union of intervals and points.  Hence, those groups are
 \textsl{o}-minimal. The converse was proved
in op. cit. (Theorem 2.1). So, every divisible ordered abelian group is
definably complete and so satisfies ${\rm DCI}$. Note that there are models of  ${\rm DCI}$ that are not  not \textsl{o}-minimal. Here, we give an example. Let $\mathcal{F}$ be a non-principal ultrafilter on the set of natural numbers, $\Bbb{N}$. Let $\mathcal{G}$ be the ultra-product of the structures  $\mathbb{Q}_n=(\mathbb{Q},+,0,<,\{0, \dots , n\})$ with respect to $\mathcal{F}$. Note that for each $n$,   $\mathbb{Q}_n$ satisfies ${\rm DCI}$. So, $\mathcal{G}$ satisfies ${\rm DCI}$ too. But $\mathcal{G}$ is not \textsl{o}-minimal, as the ultra-product of the sets $\{0, \dots , n\})$ is an infinite definable discrete set in  $\mathcal{G}$.

\end{remark}

{\it Definable compactness} in ordered structures  was introduced by \cite{PeS} where it was proved that this concept is equivalent to closed and bounded for definable sets. Also, \cite{EM} contains a weak version this notion which is far from finiteness.  Here, we use the concept of  {\it pseudo-finite set } which was introduced in \cite{F},  to give an alternative definition for definable compactness in  ordered abelian groups. Then, we provide  a definable version of the Heine-Borel
theorem in definably  complete ordered groups using ${\rm DCI}$.

Let $G$ be an $\mathcal{L}$-structure, where $\mathcal{L}$ is an expansion of the language of ordered groups. Then, a definable subset $D\subseteq G$ is said to be {\it pseudo-finite} if it is discrete, closed, and bounded. Note that these are the only first order properties in the language $\mathcal{L}$  describing  the finiteness for sets definable in $\mathcal{L}$-structures. It is easy to see that the union of two pseudo-finite sets is pseudo-finite. Even, the union of a pseudo-finite family of pseudo-finite sets is pseudo-finite (see \cite{F}). In \textsl{o}-minimal structures, a definable set is pseudo-finite if and only if it is finite.

Let $\varphi(x,y)$ be an $\mathcal{L}$-formula. By $\varphi(a,G)$, where $a\in G$,  we mean a set that is defined in $G$ by the formula $\varphi(a,y)$:  $\{b\in G| G\vDash \varphi(a,b)\}$. Then, $\{\varphi(a,G)\}_{a\in G}$ is said to be  a definable family of subsets of $G$.  A definable open cover for a set $X\subseteq G$ is a definable family  $\{\varphi(a,G)\}_{a\in G}$ which covers $X$ and for each $a\in G$, $\varphi(a,G)$ is an open subset of $G$. In the following definition we fix a directed definable family  $\{\psi(t,G)\}_{t\in G}$ of pseudo-finite sets which covers $G$, that is $\psi(t_1,G) \subseteq \psi(t_2,G)$ for $t_1<t_2$, and $\bigcup_{t\in G} \psi(t,G)=G$.
\begin{definition}
Let $G$ be an ordered abelian group. Then,  a definable  subset $D\subseteq G$ is said to be definably  compact if for  every
definable open cover $\{\varphi(a,G)\}_{a\in G}$ of $D$,  there exists  a pseudo-finite sub-cover  $\{\varphi(a,G)\}_{a\in P}$ for some pseudo-finite set $P=\psi(t_0,G)$.
\end{definition}
\begin{theorem}
  All closed and
bounded intervals of definably complete ordered groups are
definably compact.
\end{theorem}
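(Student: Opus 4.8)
The plan is to transcribe the proof of the non-definable Heine--Borel theorem (the forward direction of Theorem~\ref{HB-CI}) into the definable setting, replacing ordinary induction by the bounded form of ${\rm DCI}$ and replacing ``finite subcover'' by ``pseudo-finite subcover indexed by a member of the fixed directed family $\{\psi(t,G)\}_{t\in G}$''. Fix a definably complete ordered group $G$; by Proposition~\ref{dcg-dci} it satisfies ${\rm DCI}$, hence also its bounded version on intervals. Given $a<b$ and a definable open cover $\{\varphi(u,G)\}_{u\in G}$ of $[a,b]$, I would set
\[ S=\{x\in[a,b) : \exists t\,\forall z\,((a\le z\wedge z\le x)\to \exists u\,(\psi(t,u)\wedge\varphi(u,z)))\}, \]
the set of those $x$ for which $[a,x]$ is covered by the pseudo-finite subfamily $\{\varphi(u,G)\}_{u\in\psi(t,G)}$ for some single $t$.

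The first and conceptually decisive point is that $S$ is \emph{definable}: the clause ``covered by a pseudo-finite subfamily'' is captured by one existential quantifier over the index $t$ of the fixed directed family, so no (non-first-order) quantification over finite subsets is required and the displayed condition is a genuine $\mathcal{L}$-formula in $x$ with parameters $a$ and those of $\varphi$ and $\psi$. This is exactly what makes the fixed directed family the right vehicle for a first-order Heine--Borel statement.

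Next I would check the two hypotheses of bounded ${\rm DCI}$ for $S$ on $[a,b)$. For the base clause, $a\in\varphi(u_0,G)$ for some $u_0$, and openness gives $[a,x_0)\subseteq\varphi(u_0,G)$ for some $x_0>a$; since $\bigcup_t\psi(t,G)=G$ there is $t_0$ with $u_0\in\psi(t_0,G)$, so $[a,x_0)\subseteq S$. For the step clause, assume $[a,x)\subseteq S$, cover $x$ by an open $\varphi(u_1,G)\supseteq(x-\epsilon,x+\epsilon)$, and use $x-\epsilon/2\in S$ to fix $t_1$ with $[a,x-\epsilon/2]\subseteq\bigcup_{u\in\psi(t_1,G)}\varphi(u,G)$. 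Here directedness is essential: picking $t'$ with $u_1\in\psi(t',G)$ and $t_2\ge\max(t_1,t')$ yields $\psi(t_1,G)\cup\{u_1\}\subseteq\psi(t_2,G)$, so $[a,x+\epsilon/2]$ is covered by the single pseudo-finite subfamily indexed by $\psi(t_2,G)$, giving $[a,y)\subseteq S$ for a suitable $y>x$. Bounded ${\rm DCI}$ then forces $S=[a,b)$, and one last merge handles the right endpoint: covering $b$ by an open $\varphi(u_n,G)\supseteq(b-\epsilon,b]$, taking $t_n$ with $[a,b-\epsilon]\subseteq\bigcup_{u\in\psi(t_n,G)}\varphi(u,G)$ (as $b-\epsilon\in S$), and absorbing $u_n$ into some $\psi(t_0,G)\supseteq\psi(t_n,G)\cup\{u_n\}$ produces the desired pseudo-finite subcover of $[a,b]$.

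The main obstacle is not the induction itself, which runs parallel to the classical argument, but keeping the bookkeeping first-order: every ``finite'' object must be realized as some $\psi(t,G)$, and the classical fact ``a finite family together with one more set is finite'' has to be replaced throughout by the directedness of $\{\psi(t,G)\}_{t\in G}$, which lets me absorb an additional index $u$ (and, more generally, merge two pseudo-finite subcovers) without leaving the fixed family. I would double-check that this merging is uniform enough that $S$ remains exactly the definable set displayed above, since that definability is what legitimizes the appeal to ${\rm DCI}$.
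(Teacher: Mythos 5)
Your proposal is correct and follows essentially the same route as the paper: the same definable set $S$ (expressed with a single existential quantifier over the index $t$ of the fixed directed family), the same base step, and the same use of directedness to merge a pseudo-finite subcover with one additional index at the inductive step and at the right endpoint. The only cosmetic difference is that you invoke the bounded form of ${\rm DCI}$ on $[a,b)$ while the paper applies unbounded ${\rm DCI}$ to a formula made vacuously true outside $(a,b]$; the paper itself notes these are equivalent.
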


\begin{proof} Let $\langle G,+,<,0 \rangle $ be a definably complete ordered abelian group, and
$\varphi (x,y)$ define an open cover for a closed and bounded interval $[a,b]$ in $G$.
 Let
$$S=\{ x\in G | a<x\leq  b  \rightarrow   \exists t\in G,  \ [a,x]\subseteq \bigcup_{u\in \psi(t,G)} \varphi
(u,G)  \}.$$

Let $\theta (x,a,b)$ be an $\mathcal{L}$-formula which defines  the set $S$ in the $\mathcal{L}$-structure $G$.
We show that $\theta (x,a,b)$ satisfies the conditions of the ${\rm DCI}_{\theta}$.

There exists $x_0\in G$ such that $a\in \varphi(x_0,G)$. Since $\varphi(x_0,G)$ is open, then there is $c\in (a,b)$ such that $[a,c)\subseteq \varphi(x_0,G)$.
On the other hand the family $\psi$ covers $G$, hence there is an element $t_0\in G$ such that $x_0\in \psi(t_0,G)$. Thus, $G$ satisfies $\theta (g,a,b)$ for every $g\in (-\infty , c)$.

Now, let $h\in (a,b) $ and $G$ satisfy $\forall g<h\theta (g,a,b)$. There are $x_1, h_1, h_2 \in G$ such that $a<h_1<h<h_2<b$ and $(h_1, h_2) \subset \varphi (x_1, G)$. There is an element $t_1\in G$ with $x_1\in \psi(t_1, G)$. By the assumption, $G$ satisfies  $\theta (h_1,a,b)$. So, there is a pseudo finite set $\psi(t_2, G)$ such that $[a,h_1]\subseteq \bigcup_{u\in \psi(t_2,G)} \varphi(u,G)$. Since the family $\psi$ is directed, then there is $t_3\in G$ such that $\psi(t_1, G), \psi(t_2, G) \subseteq \psi(t_3, G)$. Hence, $[a,h_2]\subseteq \bigcup_{u\in \psi(t_3,G)} \varphi(u,G)$.

Therefor the conditions of ${\rm DCI}_{\theta}$ hold in $G$. By Proposition \ref{dcg-dci}, $G$ satisfies ${\rm DCI}_{\theta}$. Hence, $S=G$, and so there is a pseudo finite set $\psi(t, G)$ for some $t\in G$ such that  $[a,b]\subseteq \bigcup_{u\in \psi(t,G)} \varphi(u,G)$.

\end{proof}

\end{document}